\newtheorem{theorem}{Theorem}
\newtheorem{proposition}[theorem]{Proposition}
\newtheorem{lemma}[theorem]{Lemma}
\theoremstyle{definition}
\newtheorem{definition}[theorem]{Definition}
\theoremstyle{remark}
\newtheorem{remark}[theorem]{Remark}
\newtheorem{example}[theorem]{Example}
\def\R{\mathbb{R}}
\def\N{\mathbb{N}}
\def\haus{\mathcal{H}^{n-1}}
\def\pscal#1#2{\left\langle#1,\,#2\right\rangle}
\def\ff{f}
\def\ll{\lambda}
\DeclareMathOperator{\dive}{div}
\def\W{\mathcal W}
\begin{document}

\title[A new symmetry criterion]%
{A new symmetry criterion\\ based on the distance function \\ and applications to PDE's}%
\author[G.~Crasta, I.~Fragal\`a]{Graziano Crasta,  Ilaria Fragal\`a}
\address[Graziano Crasta]{Dipartimento di Matematica ``G.\ Castelnuovo'', Univ.\ di Roma I\\
P.le A.\ Moro 2 -- 00185 Roma (Italy)}

\email{crasta@mat.uniroma1.it}

\address[Ilaria Fragal\`a]{
Dipartimento di Matematica, Politecnico\\
Piazza Leonardo da Vinci, 32 --20133 Milano (Italy)
}

\email{ilaria.fragala@polimi.it}

\keywords{distance function, cut locus, symmetry, 
overdetermined problems,
mass transport}
\subjclass[2010]{Primary 35N25, Secondary 49K20, 35J70, 53A07}


\date{\today}

\begin{abstract} We prove that, if $\Omega\subset \R ^n$ is an open bounded starshaped domain of class $C^2$, the constancy over $\partial \Omega$
of the function
$$\varphi(y) = \int_0^{\ll(y)} \prod_{j=1}^{n-1}[1-t \kappa_j(y)]\, dt$$
implies that $\Omega$ is a ball. Here $k _j(y)$ and $\lambda(y)$ denote respectively the principal curvatures and the cut value of a boundary point $y \in \partial \Omega$.
We apply this geometric result to different symmetry questions for PDE's: an overdetermined system of Monge-Kantorovich type equations
(which can be viewed as the limit as $p \to + \infty$ of Serrin's symmetry problem for the $p$-Laplacian), and equations in divergence form whose solutions depend only on the distance from the boundary in some subset of their domain.
\end{abstract}

\maketitle

\section{Introduction}
Characterizing special classes of hypersurfaces in a metric space, in particular spheres, in terms of some
properties of their principal curvatures,  is a classical and challenging problem in Differential Geometry.
A fundamental result by Alexandrov states that a bounded smooth domain in the Euclidean space is a ball provided the mean curvature
of its boundary is constant \cite{Al}.  For further characterizations of spheres involving the symmetric functions of the principal curvatures, see {\it e.g.}
\cite{Ko, MoRo}  and the references therein.

Alexandrov's result has many powerful applications in Analysis, especially in the fields of PDE's and shape optimization; in fact,
it allows to obtain for instance symmetry in overdetermined boundary value problems (as in the seminal paper \cite{S} and the subsequent literature),
of in extremum problems for variational functionals under geometric constraints (see the monograph \cite{He}).

Often, symmetry questions arise in problems in which a crucial role is played by the distance function from the boundary of an open bounded  domain $\Omega\subset \R ^n$ , $d_\Omega (x) := {\rm dist} (x, \partial \Omega)$. This happens for instance when studying
PDE's related with mass transportation theory (see \cite{BoBu, CaCa, CCCG}), or
minimization problems in the class of so-called {\it web functions}, namely functions which only depend on $d_\Omega$ (see \cite{C, CFG1, CFG2, CFG3, CG1, CG2, G}).
Symmetry questions in these frameworks, which will be described more precisely below, pushed us to set up a new roundedness criterion, which brings into play
the distance function in a more intrinsic way than merely through the boundary curvatures.  More precisely, it involves the following function $\varphi$ associated with a bounded smooth domain $\Omega$ of $\R^n$:
\begin{equation}\label{f:phi}
\varphi(y) := \int_0^{\ll(y)} \prod_{j=1}^{n-1}[1-t \kappa_j(y)]\, dt\,,
\qquad y\in\partial\Omega\,.
\end{equation}
Here $\kappa _j(\cdot) $ denote the principal curvatures of $\partial \Omega$, whereas $\lambda (y)$ is the {\it cut value} of $y$: letting $\nu(y)$ denote the unit outer normal to $\partial \Omega$ at $y$, and  $\pi (x)$ be the point of $\partial \Omega$ such that $|x - \pi (x)| = {\rm dist} (x, \partial \Omega)$ (which is uniquely determined for ${\mathcal L} ^n$-a.e.\ $x \in \Omega$), the function $\lambda ( \cdot)$ is  defined on $\partial \Omega$ by
\begin{equation}\label{cut}
\lambda (y) := \sup \big \{ t \geq 0 \ :\ \pi (y - t \nu (y) ) = y \big \}\ , \qquad  y \in \partial \Omega\ .
\end{equation}
Intuitively, by following the inner normal to $\partial \Omega$ at $y$, one can continue without crossing another straight line normal to $\partial \Omega$, exactly until one arrives at a distance $\lambda (y)$ from the boundary. Thus, $\Omega$ is filled up by the line segments $\{ y - t \nu (y) \, :\, y \in \partial \Omega\, ,\,  t \in (0, \lambda (y))\}$.

Let us mention that the functions $\varphi$ and $\lambda$ already appeared in the literature in different contexts: concerning the function $\varphi$, it is a crucial tool in the proof of the isoperimetric inequality \textsl{\`a la Gromov} (see {\it e.g.} \cite[\S 1.6.8]{Be}), and appears also in mathematical models for granular materials \cite{CCCG, CCS,CM}; concerning the function $\lambda$, its regularity has been studied in \cite{CCG,IT,LiNi}, while some of its applications to variational problems can be found in \cite{Ce1,CFG1,CFG2,CFG3}.

Our new symmetry criterion is formulated  in terms of
the function $\varphi$ and of the mean curvature $H$ of $\partial \Omega$,
\begin{equation}\label{mc}
H(y) := \frac{\kappa_1(y)+\cdots\kappa_{n-1}(y)}{n-1}\,,
\qquad y\in\partial\Omega\,,
\end{equation}
and reads  as follows. By saying that $\Omega$ is {\it starshaped}, we mean that
\begin{equation}\label{star}
\langle y, \nu (y)  \rangle >0 \qquad \forall \, y \in \partial \Omega \ .
\end{equation}

\begin{theorem}\label{t:geom2}
Let $\Omega\subset\R^n$ be a bounded connected open set of class $C^2$,
starshaped with respect to the origin.
Assume that
there exists a point $y_0\in\partial\Omega$
such that
\begin{equation}\label{f:hypoa}
H(y_0) = \max_{y\in\partial\Omega} H(y)\,,\qquad
\varphi(y_0) \geq \frac{|\Omega|}{|\partial\Omega|}
\,.
\end{equation}
Then $\Omega$ is a ball. In particular, if $\varphi$ is constant on $\partial \Omega$, assumption \eqref{f:hypoa} is satisfied, and hence $\Omega$ is a ball.
\end{theorem}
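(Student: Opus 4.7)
First I would establish the identity $|\Omega| = \int_{\partial\Omega} \varphi\, d\haus$. Parametrizing $\Omega$ by normal coordinates $(y,t) \mapsto y - t\nu(y)$ on the set $\{(y,t) : y \in \partial\Omega,\ 0 < t < \lambda(y)\}$, the map is a diffeomorphism onto $\Omega$ minus its cut locus (a null set), with Jacobian $\prod_{j=1}^{n-1}(1 - t\kappa_j(y))$. The change-of-variables formula yields the identity, so $|\Omega|/|\partial\Omega|$ is simply the average of $\varphi$ on $\partial\Omega$, and the second condition of \eqref{f:hypoa} asserts that $\varphi(y_0)$ is at least this average.

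Next I would bound $\varphi$ pointwise by $H$. On $[0,\lambda(y)]$ the factors $1 - t\kappa_j(y)$ are all non-negative, because $\lambda(y) \leq 1/\max_j \kappa_j(y)$ as soon as some $\kappa_j$ is positive. The AM--GM inequality then gives $\prod_j (1 - t\kappa_j) \leq (1 - tH)^{n-1}$, and when $H(y) > 0$, since $[0,\lambda(y)] \subset [0, 1/H(y)]$, integration yields
\begin{equation*}
\varphi(y) \leq \frac{1}{nH(y)}\bigl[1 - (1 - \lambda(y) H(y))^n\bigr] \leq \frac{1}{n H(y)}.
\end{equation*}
Applied at $y_0$, together with $\varphi(y_0) \geq |\Omega|/|\partial\Omega|$ and $H(y_0) = \max_{\partial\Omega} H$, this translates into $\max_{\partial\Omega} H \leq |\partial\Omega|/(n|\Omega|)$.

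For the matching reverse inequality I would invoke the Minkowski--Hsiung identity $\int_{\partial\Omega} H\, \pscal{y}{\nu(y)}\, d\haus = |\partial\Omega|$, combined with $\int_{\partial\Omega} \pscal{y}{\nu(y)}\, d\haus = n|\Omega|$ from the divergence theorem. Together these say that the $\pscal{y}{\nu}$-weighted average of $H$ is $|\partial\Omega|/(n|\Omega|)$, so since the weight is strictly positive by starshapedness one has $\max_{\partial\Omega} H \geq |\partial\Omega|/(n|\Omega|)$ (and in particular $H(y_0) > 0$, which retroactively justifies the previous step). The two bounds match, so $\max H$ equals its positively weighted average, which forces the continuous function $H$ to be constant on $\partial\Omega$; Alexandrov's soap-bubble theorem then concludes that $\Omega$ is a ball.

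I expect the main conceptual hurdle to be the pointwise bound $\varphi(y) \leq 1/(nH(y))$: here the intrinsic quantity $\lambda(y)$ enters through the focal inequality $\lambda H \leq 1$ and is combined with AM--GM on the normal Jacobian. The other ingredients are a standard parametrization of $\Omega$ and a classical rigidity result, so the substance of the argument is that a single inequality at one boundary point $y_0$, coupled with $H$ attaining its maximum there, suffices --- via Minkowski--Hsiung --- to promote local information into global constancy of $H$.
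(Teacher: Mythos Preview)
Your proposal is correct and follows essentially the same route as the paper: the pointwise bound $\varphi(y)H(y)\leq 1/n$ via AM--GM together with $\kappa_j\lambda\leq 1$, then the Minkowski integral formula and the divergence theorem (using $\pscal{y}{\nu(y)}>0$ from starshapedness) to force equality and hence constancy of $H$, concluding with Alexandrov. The only cosmetic difference is ordering: the paper proves the change-of-variables identity $|\Omega|=\int_{\partial\Omega}\varphi$ only at the end (to handle the ``in particular'' clause), and it phrases the pointwise estimate as a maximization of an auxiliary function on $\{x_j\leq 1\}$ rather than invoking $\lambda H\leq 1$ directly, but the content is the same.
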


The proof of Theorem \ref{t:geom2} is given in Section \ref{proof} below. It is obtained by showing that the existence of a point $y _0\in \partial \Omega$ fitting the two conditions in (\ref{f:hypoa}) ensures the constancy of the mean curvature. To that aim, we exploit as crucial tools the arithmetic-geometric inequality  and the Minkowski integral formula for the mean curvature (recalled in  Section \ref{secnot} below).
In particular, in order to apply successfully such formula, we need the starshapedness condition (\ref{star}):
we believe it may be unnecessary for the validity of the result, but at present we are unable to circumvent it.

Concerning the $C^2$-regularity assumption,  in dimension $n =2$ we are able to relax it, by allowing domains whose boundary is 
piecewise $C^2$ and may contain ``convex'' corners 
(see Section \ref{proof} for more details).

\medskip
Let us now turn attention to describe two different  applications of Theorem \ref{t:geom2} to symmetry questions for PDE's.

\medskip
The first application concerns an overdetermined system of PDE's of Monge-Kantorovich type, which can be viewed a limit version of Serrin's symmetry result for the $p$-Laplacian operator, as $p$ tends to $+ \infty$.
To be more precise, let us recall Serrin's result: existence of a solution to the overdetermined boundary value problem
\begin{equation}\label{deqse} \left\{\begin{array}{rll}
- \Delta u =1\qquad&\hbox{ in }\Omega\, ,\\
u=0\qquad&\hbox{ on }\partial\Omega\, ,\\
|\nabla u| =c\qquad&\hbox{ on }\partial\Omega\, ,
\end{array}\right.
\end{equation}
where $c$ is a positive constant, implies that $\Omega$ is a ball.
Later on, the same symmetry statement has been generalized to the case when the Laplacian in (\ref{deqse}) is replaced by more general operators; since
the literature on this topic is very broad, we limit ourselves to quote the paper \cite{FGK}, where the interested reader can also find many related references. In particular, Serrin's result extends to the system
\begin{equation}\label{deqp} \left\{\begin{array}{rll}
- \Delta_p u =1\qquad&\hbox{ in }\Omega\,,\\
u=0\qquad&\hbox{ on }\partial\Omega\,,\\
|\nabla u| =c\qquad&\hbox{ on }\partial\Omega\,,
\end{array}\right.
\end{equation}
where $\Delta _p$ denotes the
$p$-Laplacian operator (see \cite{[BH],[DP],[GL]}).

It is then natural to ask whether the same result continues to be true when considering, in some  sense,  the limit as $p \to + \infty$.
Let us mention that, in this spirit, overdetermined boundary value problems for the $\infty$-Laplacian have been recently studied in \cite{BuKa}.
Here we look rather at the system of PDE's of Monge-Kantorovich type which arises as the limiting problem of (\ref{deqp}) when  $p \to + \infty$.
Actually, let $u _p$ be the unique solution to the Dirichlet boundary value problem given by the first two equations in (\ref{deqp}).
As explained more in detail in Section  \ref{secsabbia}, passing to the limit as $p \to + \infty$ in such Dirichlet problem, leads to consider the following system in the unknowns $u \in {\rm Lip } (\Omega) $ and $v \in L^1(\Omega; \R^+)$:
\begin{equation}\label{f:MK}
\begin{cases}
-\dive (v \nabla u) = 1 & \text{in}\ \Omega\,,\\
|\nabla u| \leq 1 & \text{a.e.\ in}\ \Omega\,, \\
u=0  &  \text{on}\ \partial\Omega\,,\\
(1-|\nabla u|) v = 0 & \text{a.e.\ in}\ \Omega\,. \\
\end{cases}
\end{equation}
Heuristically, the variables $u$ and $v$ represent respectively the limit of $u _p$ and $|\nabla u _p| ^ p$ as $p \to + \infty$
({\it cf.}\ Section \ref{secsabbia}).
System (\ref{f:MK}) always admits solutions;
moreover, if $(u,v)$ is a solution to (\ref{f:MK}),
then $v\in C(\overline{\Omega})$ {see \cite{CaCa,CCCG}).
Therefore, the following question makes sense:
does symmetry holds for (\ref{f:MK}) if, as a counterpart to the last equation in (\ref{deqp}), we ask that
$v$ is constant on $\partial\Omega$?
In other words:
\begin{equation}\label{Q2}
\textit{If   \eqref{f:MK} admits a solution $(u, v)$ with $v$ constant on $\partial\Omega$, is $\Omega$ a ball?}
\end{equation}

In Section \ref{secsabbia}, we show that the answer to question (\ref{Q2}) is affirmative, as a consequence of Theorem \ref{t:geom2}.
We provide two significant physical interpretations of this symmetry result in different frameworks:
a shape optimization problem for heat conductors considered in \cite{BoBu}, and a two-layers model in granular matter theory studied in \cite{CCCG}.

\medskip
The second application concerns ``partially web solutions'' to equations
in divergence form.
Let $\Omega$ be as above, and let $\omega$ be a bounded connected open subset of $\Omega$;
we define the space of web functions on $\Omega$ and the space of their restrictions to $\omega$  respectively as
$$\begin{array}{ll}
& \W (\Omega) := \Big \{ u \in W ^ {1, 1} (\Omega) \ : \ u(x) \hbox{ depends only from } d_\Omega (x) \Big \}\,,
\\ \noalign{\medskip}
& \W (\Omega; \omega) := \Big \{ u \in W ^ {1, 1} (\Omega) \ : \   u (x) = \tilde u(x)  \,  \hbox{ a.e.\ on $\omega$, for some } \tilde u \in \W (\Omega)  \Big \}\ .
\end{array}
$$
Given an equation in divergence form on $\Omega$, with a constant source term,
\begin{equation}\label{f:elleq}
- {\rm div}\big ( A(|\nabla u|) \nabla u\big ) = 1 \qquad \mbox{ in } \Omega\ ,
\end{equation}
where $A\in C([0,+\infty))$,
we say that $u$ is a solution if $A(|\nabla u|)\, |\nabla u| \in L^1(\Omega)$ and
$$
\int_{\Omega} A(|\nabla u|) \pscal{\nabla u}{\nabla\psi} \, dx
= \int_{\Omega} \psi\, dx
\qquad
\forall \psi\in C^{\infty}_0(\Omega)\,.
$$
We then ask the following question:
\begin{equation}\label{Q1}
\textit{If \eqref{f:elleq} admits a solution $u$ belonging to $\W (\Omega; \omega)$, is $\Omega$ a ball?}
\end{equation}
In order to provide some conditions on $\omega$ which are sufficient for a positive answer to question (\ref{Q1}), we shall restrict attention to the case when $\omega$ is of the form
$$\Omega _\Gamma:= \Big \{ y - t \nu (y) \ :\ y \in \Gamma \, ,\ t \in (0, \lambda (y)) \Big \}\ ,$$
for some relatively open connected set  $\Gamma \subseteq \partial \Omega$.

Let us remark that, if $u$ is a solution to $(\ref{f:elleq})$ belonging to $\W (\Omega; \Omega_\Gamma)$, since the restriction of $u$ to $\Omega _\Gamma$ can be written as $h (d _\Omega)$ for some function $h$, it holds
$$u = h (0) \qquad \hbox{ and } \qquad |\nabla u| = |h' (0)| \qquad \hbox{ on } \Gamma\ .$$
Hence, up to an additive constant, a solution $u$ to $(\ref{f:elleq})$ lying in $\W (\Omega; \Omega_\Gamma)$ satisfies the following system
(which is clearly weaker than the requirement $u \in \W (\Omega; \Omega_\Gamma)$):
\begin{equation}\label{deqbis} \left\{\begin{array}{rll}
-{\rm div}(A(|\nabla u|)\nabla u)=1\qquad&\hbox{ in }\Omega\,,\\
u=0\qquad&\hbox{ on }\Gamma\,, \\
|\nabla u| =c\qquad&\hbox{ on }\Gamma\, .
\end{array}\right.
\end{equation}
In particular,  if $\Gamma \equiv \partial \Omega$, the answer to question (\ref{Q1}) is affirmative, as soon as the operator $A$ is such that Serrin's symmetry result is valid for the elliptic equation (\ref{f:elleq});  in this direction, let us  mention that symmetry for minimizers to variational functionals in $\W(\Omega)$ was studied in \cite{C}.

Here we are rather interested in the case when $\Gamma$ is strictly contained into $\partial \Omega$. We point out that
system (\ref{deqbis}) is quite close to the the so-called partially overdetermined boundary value problems studied for the Laplace operator in \cite{FG}. However, in such partially overdetermined problems, one among the Dirichlet and the Neumann condition was required to hold on the whole of $\partial \Omega$ (see also \cite{FV, FGLP}), which is not necessarily the case when
$u \in \W (\Omega; \Omega_\Gamma)$.

In
Section \ref{secparweb}, using Theorem \ref{t:geom2}, we obtain sufficient conditions on $\Gamma$ for a positive answer to question (\ref{Q1}) in  dimension $n=2$. We also compare more in detail
our results and those obtained for partially overdetermined boundary value problems in \cite{FG}.

\medskip
The paper is organized as follows.
After providing some preliminaries in Section \ref{secnot}, in  Section \ref{proof} we give the proof of Theorem \ref{t:geom2}, and we generalize it to piecewise $C^2$ domains in the two-dimensional case.
Sections \ref{secsabbia} and \ref{secparweb} are  devoted to the applications of the geometric result to PDE's,
 respectively to Monge-Kantorovich type equations, and to
equations in divergence form with partially web solutions.

\bigskip
{\bf Acknowledgments.}
The authors wish to acknowledge the helpful comments
of Giuseppe Buttazzo and Filippo Gazzola
during the preparation of the manuscript.


\section{Notation and Preliminaries}\label{secnot}

Let us fix some notation and geometric background.

The standard scalar product of two vectors $x,y\in\R^n$
is denoted by $\pscal{x}{y}$, and $|x|$ denotes the
Euclidean norm of $x\in\R^n$.
As is customary, $B_r(x_0)$ and $\overline{B}_r(x_0)$
are respectively the open and the closed ball
centered at $x_0$ and with radius $r>0$.

If $\Omega$ is an open subset of $\R^n$, we shall denote
by $|\Omega|$ and $|\partial\Omega|$ respectively the $n$-dimensional Lebesgue measure of $\Omega$
and the $(n-1)$-dimensional Hausdorff measure of its boundary.

A bounded open set $\Omega\subset\R^n$
(or, equivalently, its closure
$\overline{\Omega}$ or its boundary $\partial \Omega$)
is of class $C^k$, $k\in\N$,
if for every point $x_0\in\partial \Omega$
there exists a ball $B=B_r(x_0)$ and a one-to-one
mapping $\psi\colon B\to D$ such that
$\psi\in C^k(B)$, $\psi^{-1}\in C^k(D)$,
$\psi(B\cap \Omega)\subseteq\{x\in\R^n;\ x_n > 0\}$,
$\psi(B\cap\partial \Omega)\subseteq\{x\in\R^n;\ x_n = 0\}$.

Given a nonempty open set $\Omega\subset\R^n$, we denote by
$d_{\Omega}\colon\overline{\Omega}\to\R$ the distance function from the boundary of $\Omega$,
defined by
\[
d_{\Omega}(x) := \min_{y\in\partial\Omega} |x-y|,\quad x\in\overline{\Omega}.
\]
It is well-known that $d_{\Omega}$ is a $1$-Lipschitz function; by Rademacher theorem,
its singular set $\Sigma$ ({\it i.e.}, the set of points where $d_{\Omega}$ is not differentiable)
has zero Lebesgue measure
(more precisely, it is a $\mathcal{H}^{n-1}$-rectifiable set,
see \cite[Prop.~4.1.3]{CaSi}).
The closure of $\Sigma$ is called the \textsl{cut locus} of $\Omega$, and in general
it may have a non-vanishing Lebesgue measure.
Nevertheless, it can be shown that if $\Omega$ is of class $C^2$, then $|\overline{\Sigma}| = 0$;
moreover, in this case $\overline{\Sigma}\subset\Omega$ and $d_{\Omega}$ is of class $C^2$ in $\overline{\Omega}\setminus\overline{\Sigma}$ (see e.g.\ \cite{CM}).

Under the assumption that $\Omega\subset\R^n$ is a bounded open set of class $C^2$,
for every $y\in\partial \Omega$, we denote respectively by $\nu(y)$
and $T_y \Omega$
the unique outward unit normal vector
and the tangent space
of $\partial \Omega$ at $y$.
The map $\nu\colon\partial\Omega\to S^{n-1}$
is called the
\textsl{spherical image map}
(or \textsl{Gauss map}).
Then, for every $y \in \partial \Omega$, we denote by $\lambda(y) $ the \textsl{cut value} of $y$ intended according to definition (\ref{cut}).
It is well known that the singular set $\Sigma$ is a subset of
the collection $C$ of all
\textsl{cut points} $y - \lambda (y) \nu (y )$, $y\in\partial\Omega$;
this set $C$ has always vanishing Lebesgue measure
(see \cite{Bi}) and is contained in $\overline{\Sigma}$.


For every $y\in\partial \Omega$,
the differential $d\nu_y$ of the Gauss map at $y$
maps the tangent space $T_y \Omega$ into itself.
The linear map
$L_y:= d\nu_y\colon T_y \Omega\to T_y \Omega$ is called the
\textsl{Weingarten map}.
The bilinear form defined on $T_y \Omega$
by
$S_y(v,w) = \pscal{L_y\, v}{w}$,
$v,w\in T_y \Omega$,
is the
\textsl{second fundamental form} of $\partial \Omega$ at $y$.
The geometric meaning of the Weingarten map
is the following:
for every $v\in T_y \Omega$ with unit norm,
$S_y(v,v)$ is equal to the normal curvature of
$\partial \Omega$ at $y$ in the direction $v$,
that is,
$S_y(v,v) = -\langle \ddot{\xi}(0),  \nu(y) \rangle$,
where $\xi(t)$ is any parameterized curve in
$\partial \Omega$ such that $\xi(0) = y$
and $\dot{\xi}(0) = v$.
The eigenvalues $\kappa_1(y),\ldots,\kappa_{n-1}(y)$
of the Weingarten map $L_y$ are,
by definition, the
\textsl{principal curvatures} of $\partial \Omega$ at $y$.
The corresponding eigenvectors are called
the \textsl{principal directions}
of $\partial \Omega$ at $y$.
It is readily shown that every $\kappa_i(y)$ is
the normal curvature of
$\partial \Omega$ at $y$ in the direction
of the corresponding eigenvector.
{}From the $C^2$ regularity assumption on the manifold $\partial \Omega$,
it follows that the principal curvatures of $\partial \Omega$
are continuous functions on $\partial \Omega$.
Their arithmetic mean is the mean curvature $H$ of $\partial \Omega$, {\it cf.}\ definition (\ref{mc}).
The following classical identity  is usually referred as \textsl{Minkowski integral formula} for the mean curvature, see for instance \cite[Section 2A]{MoRo}:
\begin{equation}\label{f:mink}
\int_{\partial\Omega} H(y) \pscal{y}{\nu(y)} \, d\haus
= |\partial\Omega|.
\end{equation}

Finally, we shall need some elementary results about the relationship between cut value and boundary curvatures.
In any space dimension, we have the following upper bound:

\begin{lemma}\label{l:kd}
Let $\Omega\subset\R^{n}$ be a bounded connected open set of class $C^2$.
Then, for every $y\in\partial\Omega$, it holds
$$\kappa_i(y) \ll(y) \leq 1  \qquad \forall \, i=1,\ldots,n-1\ .$$
\end{lemma}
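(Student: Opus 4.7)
The plan is to exploit the geometric meaning of the cut value: for any $t<\lambda(y)$, the inward normal segment from $y$ of length $t$ is the unique shortest path to the boundary from its endpoint, which forces a tangent ball of radius $t$ to sit inside $\Omega$. Comparison of principal curvatures at the point of tangency then yields the bound.

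More precisely, fix $y \in \partial\Omega$, let $t\in(0,\lambda(y))$, and set $x:=y-t\nu(y)$. By the very definition of $\lambda$ in \eqref{cut}, we have $\pi(x)=y$, so $d_\Omega(x)=|x-y|=t$. Since $\text{dist}(x,\partial\Omega)=t$, no point of $\partial\Omega$ lies in the open ball $B_t(x)$; because $x\in\Omega$ and $\Omega$ is open and connected, this yields $B_t(x)\subset\Omega$, with $\overline{B_t(x)}$ internally tangent to $\partial\Omega$ at $y$ (the outward unit normal to $\partial B_t(x)$ at $y$ being $(y-x)/t=\nu(y)$, which coincides with the outward unit normal to $\partial\Omega$).

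The heart of the argument is now a standard comparison of second fundamental forms at an interior tangency. For any unit tangent vector $v\in T_y\partial\Omega=T_y\partial B_t(x)$, parametrize a curve $\xi$ in $\partial\Omega$ with $\xi(0)=y$, $\dot\xi(0)=v$. Since $B_t(x)\subset\Omega$, the function $s\mapsto|\xi(s)-x|^2-t^2$ is non-negative with a minimum at $s=0$, so its second derivative at $0$ is non-negative; expanding this derivative and using $\langle y-x,\nu(y)\rangle=t$ produces exactly
\[
S_y(v,v)\;\leq\;\frac{1}{t}\,.
\]
Applying this with $v$ running through the principal directions of $\partial\Omega$ at $y$ gives $\kappa_i(y)\leq 1/t$ for each $i=1,\ldots,n-1$. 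Letting $t\nearrow\lambda(y)$ yields $\kappa_i(y)\,\lambda(y)\leq 1$.

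There is no serious obstacle: the only points requiring care are (i) the inclusion $B_t(x)\subset\Omega$, which follows cleanly from $d_\Omega(x)=t$ together with connectedness, and (ii) the sign bookkeeping in the curvature comparison, which is routine once one fixes the convention that $\nu$ is the outward unit normal and $\kappa_i$ is the corresponding normal curvature. The case $\lambda(y)=+\infty$ is vacuous since $\Omega$ is bounded (in fact one can fit arbitrarily large tangent balls only in half-spaces), and the case of zero principal curvatures is trivially covered by the inequality.
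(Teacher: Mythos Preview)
Your proof is correct and follows essentially the same route as the paper: place an interior tangent ball at $y$ using the cut value, then compare curvatures at the point of tangency. The only cosmetic differences are that you work with $t<\lambda(y)$ and pass to the limit (while the paper goes straight to $t=\lambda(y)$), and you spell out the second-derivative computation that the paper leaves implicit.
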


\begin{proof}
Given $y\in\partial\Omega$ let $x := y - \ll(y) \nu(y)$ be
its cut point.
Since $d_{\Omega}(x) = \ll(y)$, the open ball $B_{\ll(y)}(x)$
is contained in $\Omega$ and is tangent to $\partial\Omega$ at $y$.
Therefore, for every $i\in\{1,\ldots,n-1\}$ 
we have either $\kappa_i(y)\leq 0$ or $1 / \kappa_i(y) \geq \ll(y)$.
\end{proof}

\bigskip
In dimension $n=2$, the cut value of boundary points with maximal curvature can be easily characterized as follows:

\begin{lemma}\label{l:focal} Let $\Omega\subset\R^2$ be a bounded connected open set of class $C^2$, and let $y_0\in\partial\Omega$ be a maximum point of the curvature $\kappa$ of $\partial \Omega$. Then
 $$\lambda(y_0) = 1/\kappa(y_0)\ .$$
\end{lemma}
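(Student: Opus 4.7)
Plan: The upper bound $\lambda(y_0)\le 1/\kappa(y_0)$ is immediate from Lemma \ref{l:kd}, once one verifies $\kappa(y_0)>0$. This is automatic: if $y^\ast\in\partial\Omega$ maximises $|y|^2$, then $y^\ast = |y^\ast|\,\nu(y^\ast)$ and the arc-length second-derivative test gives $\kappa(y^\ast)\ge 1/|y^\ast|>0$, whence $\kappa(y_0)\ge\kappa(y^\ast)>0$.

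For the reverse inequality, set $R_0:=1/\kappa(y_0)$ and argue by contradiction: assume $\lambda_0:=\lambda(y_0)<R_0$ and let $x_0:=y_0-\lambda_0\,\nu(y_0)$ be the cut point. Since $\lambda_0$ is strictly below the focal distance of $y_0$, the loss of unique nearest point at $x_0$ cannot arise from a focal singularity; by the standard structure of the cut locus for $C^2$ boundaries in two dimensions it must therefore arise from a collision, i.e.\ there exists $y_1\in\partial\Omega\setminus\{y_0\}$ with $y_1-\lambda_0\,\nu(y_1)=x_0$ and $|y_1-x_0|=\lambda_0=d_\Omega(x_0)$. Consequently $\overline{B}_{\lambda_0}(x_0)\subset\overline{\Omega}$ is tangent to $\partial\Omega$ at both $y_0$ and $y_1$.

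Parametrise an arc $\gamma\colon[0,L]\to\partial\Omega$ joining $y_0$ to $y_1$ by arc length and consider $f(s):=|\gamma(s)-x_0|^2$. Then $f(0)=f(L)=\lambda_0^2$, $f\ge\lambda_0^2$ on $[0,L]$, and the identity $f''(s)=2-2\kappa(\gamma(s))\pscal{\gamma(s)-x_0}{\nu(\gamma(s))}$ gives $f''(0)=2(1-\lambda_0/R_0)>0$. Thus $y_0$ (and symmetrically $y_1$) is a strict local minimum of $f$, forcing an interior maximum at some $s^\ast\in(0,L)$. In the leading configuration $\gamma(s^\ast)-x_0=d^\ast\,\nu(\gamma(s^\ast))$, with $d^\ast:=|\gamma(s^\ast)-x_0|$, the inequality $f''(s^\ast)\le 0$ combined with the global bound $\kappa(\gamma(s^\ast))\le\kappa(y_0)=1/R_0$ forces $d^\ast\ge R_0$.

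The final step is to turn this into a contradiction with $\lambda_0<R_0$. A direct computation yields $|y_1-c_0|^2=R_0^2-2\lambda_0(R_0-\lambda_0)(1-\cos\alpha)<R_0^2$, where $c_0:=y_0-R_0\,\nu(y_0)$ is the focal centre of $y_0$ and $\alpha\in(0,\pi]$ is the angle between $\nu(y_0)$ and $\nu(y_1)$; hence $y_1$ lies strictly inside the osculating disk $B_{R_0}(c_0)$. On the other hand, the fourth-order Taylor expansion of $|\gamma(s)-c_0|^2$ at $s=0$, which exploits $\kappa'(y_0)=0$ and $\kappa''(y_0)\le 0$, shows that $\partial\Omega$ stays outside $B_{R_0}(c_0)$ in a neighbourhood of $y_0$. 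The boundary arc from $y_0$ to $y_1$ must therefore enter $B_{R_0}(c_0)$, and applying the second-order analysis of the previous step now to $|\gamma(s)-c_0|^2$ at the resulting interior minimum, together with the hypothesis $\kappa\le\kappa(y_0)$, produces the required contradiction. The main obstacle is precisely this last comparison step, which is the two-dimensional instance of Blaschke's inner rolling-ball principle and relies essentially on the simply connected (here starshaped) geometry implicit in the paper's applications.
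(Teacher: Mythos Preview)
Your argument has two genuine gaps, and once they are repaired you are essentially re-deriving the very result the paper simply cites.

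First, the regularity. The lemma assumes only that $\partial\Omega$ is $C^2$, so the curvature $\kappa$ is merely continuous. Your ``fourth-order Taylor expansion of $|\gamma(s)-c_0|^2$ at $s=0$'' and your use of $\kappa'(y_0)=0$, $\kappa''(y_0)\le 0$ are therefore unjustified: these require $\gamma\in C^4$, i.e.\ $\partial\Omega\in C^4$. With only $C^2$ regularity the boundary and its osculating circle have second-order contact at $y_0$, and you cannot conclude that $\partial\Omega$ stays locally outside $B_{R_0}(c_0)$; it may oscillate across $\partial B_{R_0}(c_0)$ arbitrarily often near $y_0$.

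Second, the final step is not actually carried out. You yourself identify ``the main obstacle'' as the last comparison, call it ``the two-dimensional instance of Blaschke's inner rolling-ball principle'', and appeal to ``starshaped geometry implicit in the paper's applications''. But starshapedness is \emph{not} a hypothesis of this lemma; and Blaschke's rolling-ball principle (equivalently, the relevant case of Schur's comparison theorem for plane curves) is precisely the content you need to establish, not assume. The intermediate analysis of $f(s)=|\gamma(s)-x_0|^2$ and the conclusion $d^\ast\ge R_0$ are then never used in the closing paragraph, so the contradiction is not reached.

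By contrast, the paper's proof is a single line for the nontrivial inequality: since $\kappa(y)\le 1/R_0$ for every $y\in\partial\Omega$, Schur's theorem for plane curves (do~Carmo, \S5--7, Ex.~7) gives $B_{R_0}(y_0-R_0\nu(y_0))\subset\Omega$, hence $\lambda(y_0)\ge R_0$. Your route, once completed, would amount to reproving this classical comparison result; it is cleaner to invoke it directly, which also avoids any spurious regularity or starshapedness hypotheses.
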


\begin{proof}
We observe that, since $\partial\Omega$ is compact, we have that $\kappa(y_0)>0$.
From Lemma \ref{l:kd}, we know that $\kappa(y_0)\lambda(y_0)\leq 1$.
Let us show the converse inequality. Setting $r := 1/\kappa(y_0)$,
by assumption we have that $\kappa(y)\leq 1/r$ for every $y\in\partial\Omega$.
By Schur's theorem for plane curves (see \cite[Sect.~5-7, Ex.7]{doCarmo})
we have that the disk
$B = y_0 - r \nu(y_0) + B_r(0)$ is contained in $\Omega$,
so that $\lambda(y_0)\geq r$.
\end{proof}
\bigskip

\section{The geometric result}\label{proof}

This section is devoted to prove Theorem \ref{t:geom2} and to extend it to less regular domains in the two-dimensional case. 

\bigskip
{\it Proof of Theorem \ref{t:geom2}}.
We divide the proof into four steps.

\smallskip

\textsl{Step 1: upper bound for $\varphi (y) H (y)$.}

As a first step, we claim that the following inequality holds true:
\begin{equation}\label{f:basic}
\varphi(y) H (y) \leq \frac{1}{n} \qquad \forall \, y \in \partial \Omega\ .
\end{equation}
Indeed, let
us fix $y\in\partial\Omega$ and set
$x_j := \ll(y)\kappa_j(y)$, $j=1,\ldots,n-1$.
Since $\ll(y) > 0$, using the change of variables $s = t/\ll(y)$ in the integral
in (\ref{f:phi}) and multiplying by $H(y)$ we obtain the equality
\begin{equation}\label{f:hphi}
\varphi(y) H(y) =
\ll(y) H(y)  \int_0^1 \prod_{j=1}^{n-1}(1-s x_j)\, ds =
\ff(x_1,\ldots, x_{n-1})
\,,
\end{equation}
where $\ff$ is the function defined by
\begin{equation}\label{f:f}
\ff(x_1,\ldots,x_{n-1}) := \frac{x_1+\cdots+x_{n-1}}{n-1}
\int_0^1  \prod_{j=1}^{n-1} (1-s\, x_j)\, ds\,.
\end{equation}
In view of (\ref{f:hphi}) and (\ref{f:f}), and since
$x _j \leq 1$ by Lemma~\ref{l:kd}, in order to find an upper bound for the product $\varphi(y)H(y) $, we have to maximize $f$  on the set
$$D := \{(x_1,\ldots,x_{n-1}) \in \R^{n-1}:\ x_j\leq 1\ \forall j\}\ .$$
Since $\ff(x_1,\ldots,x_{n-1}) < 0$ if $x_1+\ldots +x_{n-1} < 0$,
it is clear that $\max_D \ff$ is achieved
on the compact set
$K := D \cap \{x_1+\cdots +x_{n-1}\geq 0\}$ .
Let $(x_1, \ldots, x_{n-1}) \in K$
and denote by $\overline{x}$ their arithmetic mean. From the arithmetic--geometric mean inequality, taking into account that $\overline x \geq 0$,
we have
\[
\begin{array}{ll} \displaystyle{\ff(x_1,\ldots,x_{n-1}) }& \displaystyle{= \overline{x}\int_0^1 \prod_{j=1}^{n-1} (1-s\, x_j)\, ds}\\ &\displaystyle{ \leq
\overline{x}\int_0^1 \left(1-s\overline{x}\right)^{n-1} ds }\\ \noalign{\medskip} &  \displaystyle{= \frac{1}{n} \big [ 1 - ( 1 - \overline x) ^ n \big ] \leq \frac{1}{n}}\ . \end{array}
\]
Then the inequality (\ref{f:basic}) is proved. 

\smallskip
\textsl{Step 2: upper bound for $H (y)$. }

Exploiting
the assumptions (i) and (ii) made on the point $y _0$, and applying (\ref{f:basic}) at $y = y _0$, we obtain
\begin{equation}\label{f:dis1}
\frac{|\Omega|}{|\partial\Omega|}\, H(y) \leq
\frac{|\Omega|}{|\partial\Omega|}\, H(y_0) \leq
\varphi(y_0) H(y_0) \leq \frac{1}{n}
\qquad\forall y\in\partial\Omega\,.
\end{equation}

\smallskip
\textsl{Step 3: conclusion.}

We multiply (\ref{f:dis1}) by the positive quantity
$\pscal{y}{\nu(y)}$: integrating on $\partial\Omega$,
thanks to Minkowski integral formula (\ref{f:mink}) and the divergence theorem, we get
\[
|\Omega| = \frac{|\Omega|}{|\partial\Omega|}
\int_{\partial\Omega} H(y) \pscal{y}{\nu(y)} \, d\haus
\leq \frac{1}{n}\int_{\partial\Omega} \pscal{y}{\nu(y)} \, d\haus
= |\Omega|\,.
\]
We deduce that
$$\frac{|\Omega|}{|\partial\Omega|}\, H(y) \pscal{y}{\nu(y)} = \frac{1}{n} \pscal{y}{\nu(y)}\qquad \forall \, y \in \partial \Omega \ .$$
Recalling again that that $\pscal{y}{\nu(y)}$ is a (strictly) positive quantity, we infer that the mean curvature of $\partial\Omega$ is constant,
hence $\Omega$ is a ball by Alexandrov's Theorem \cite{Al}
(see also \cite{Ros}).

\smallskip

\textsl{Step 4: the case when $\varphi$ is constant.}

In order to obtain the second part of the statement, we recall the following change of variable's formula
which has been proved in \cite[Theorem 7.1]{CM} for every $h \in L ^ 1 (\Omega)$:
\begin{equation}\label{chv}
\int_{\Omega} h(x)\, dx =
\int_{\partial\Omega}\left[
\int_0^{\ll(y)}
h(y - t \nu(y))\,
\prod_{j=1}^{n-1}[1-t \kappa_j(y)]\, dt
\right]\, d\haus(y) \, .
\end{equation}
By applying it with $h \equiv 1$, we obtain that the mean value of $\varphi$ over $\partial \Omega$ is precisely $|\Omega| / |\partial \Omega|$.
Hence, if $\varphi$ is constant on $\partial \Omega$, both conditions in (\ref{f:hypoa}) are fulfilled by choosing as $y _0$ any point of maximal mean curvature.
\qed

\bigskip
In the remaining of this section, we assume without any further mention that $n=2$, 
and we study the validity of Theorem \ref{t:geom2} for domains with piecewise smooth boundary.  We start by noticing that, if ``concave" corners are present in $\partial \Omega$, the change of variables
formula \eqref{chv} is no longer valid. In fact, in presence of such kind of corners, assuming that the function $\varphi$ is constant in their complement, 
the conclusion of Theorem~\ref{t:geom2} does not remain true, as shown by the following

\begin{example}
Consider the set 
$\Omega = B_2(-1,0) \cup B_2(1,0) \subset\R^2$.
In this case $\Omega$ has two ``concave" corners at
$\mathcal{C} = \{(0,\sqrt{3}), (0, -\sqrt{3})\}$,
$\varphi(y) = 1$ for every $y\in\partial\Omega\setminus\mathcal{C}$,
but $\Omega$ is not a ball.
\end{example}

In view of the above Example, we are going to restrict our attention to the following class of domains.

\begin{definition}
\label{d:ps}
We say that a bounded connected open set $\Omega\subset\R^2$
is \textsl{piecewise $C^2$ without concave corners} if
it satisfies a uniform
exterior sphere condition, and
\[ \partial\Omega = \bigcup_{i=1}^m \Gamma_i\ ,\] 
where each
$\Gamma_i$ is a simple $C^2$ arc up to its endpoints
$y_{i-1}$ and $y_{i}$ (with the convention $y_0 = y_m$), and 
\[\Gamma_i\cap\Gamma_j =
\begin{cases}
\{y_i\}, & \text{if}\ 1\leq i\leq m-1,\ j = i+1,\\
\{y_m\}, & \text{if}\ i=m,\ j=1,\\
\emptyset, & \text{if}\ i-j\neq \pm 1\ .
\end{cases}
\]
\end{definition}

Clearly, the boundary of a domain $\Omega$ as in Definition~\ref{d:ps} 
may contain ``convex'' corners at points in the set
$${\mathcal C} := \{ y _1, \dots, y _m \}\ .$$
We point out that, for such domains, 
the change of variables formula \eqref{chv} still holds
(see \cite[Thm.~3.3]{CFV}), whereas
the Minkowski integral formula \eqref{f:mink} becomes
\begin{equation}
\label{f:minkm}
|\partial\Omega| = \int_{\partial\Omega} \kappa(y)
\pscal{y}{\nu(y)}\, d\mathcal{H}^1(y)
- \sum_{i=1}^m \pscal{y_i}{R \Delta\nu(y_i)}\,,
\end{equation}
where $R:= \begin{pmatrix}
0 & -1\\ 1 & 0\end{pmatrix}$, $\Delta\nu(y_i) = \nu_{i+1}(y_i) - \nu_i(y_i)$
(being $\nu_i$ the outer normal to the boundary component $\Gamma_i$),  and 
\[
\nu_i(y_j) := \mathop{\lim_{y\to y_j}}_{y\in\Gamma_i} \nu_i(y)
\]
(see \cite{ABF}).

%

Consequently, the following generalized versions of Theorem \ref{t:geom2}   hold. 

\begin{theorem}\label{t:geom2b}
Let $\Omega\subset\R^2$ be a 
bounded connected open set
piecewise $C^2$ without concave corners, and 
starshaped with respect to the origin.
Assume that one of the following conditions is satisfied:
\begin{itemize}
\item[(i)] $\Omega$ is of class $C^1$ and
there exists a point $y_0\in\partial\Omega\setminus\mathcal{C}$
such that
\[
H(y_0) = \max_{y\in\partial\Omega\setminus\mathcal{C}} H(y)\,,\qquad
\varphi(y_0) \geq \frac{|\Omega|}{|\partial\Omega|}
\,.
\]
\item[(ii)] 
$\varphi$ is constant in
$\partial\Omega\setminus\mathcal{C}$.
\end{itemize}
Then $\Omega$ is a ball. 
\end{theorem}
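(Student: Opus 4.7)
My plan is to adapt the four-step proof of Theorem~\ref{t:geom2} to the piecewise-$C^2$ setting, using the corner-corrected Minkowski formula~\eqref{f:minkm} together with the fact that the change of variables formula~\eqref{chv} remains valid for domains as in Definition~\ref{d:ps}. Since $\mathcal{C}$ is finite and therefore $\mathcal{H}^1$-negligible, all integrals over $\partial\Omega$ agree with those over $\partial\Omega\setminus\mathcal{C}$; moreover Step~1 of the original proof is purely pointwise on each smooth arc, so $\varphi(y)\kappa(y)\leq 1/2$ still holds on $\partial\Omega\setminus\mathcal{C}$.

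\emph{Case (i).} The global $C^1$ regularity of $\Omega$ forces $\Delta\nu(y_i)=0$ at every $y_i\in\mathcal{C}$, so~\eqref{f:minkm} reduces to the classical Minkowski identity $|\partial\Omega|=\int_{\partial\Omega}\kappa(y)\langle y,\nu(y)\rangle\, d\mathcal{H}^1$. The hypothesis on $y_0$ then allows Steps~2--3 of Theorem~\ref{t:geom2} to go through verbatim, producing $\kappa\equiv|\partial\Omega|/(2|\Omega|)$ on every arc $\Gamma_i$. Thus each $\Gamma_i$ is a circular arc of common radius $r=2|\Omega|/|\partial\Omega|$, and the $C^1$ junction at every corner forces consecutive arcs to share their osculating circle; $\partial\Omega$ is therefore a single circle and $\Omega$ is a ball.

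\emph{Case (ii).} Applying~\eqref{chv} with $h\equiv 1$ identifies the constant value as $c=|\Omega|/|\partial\Omega|>0$. The key step is to show that $\Omega$ must in fact be of class $C^1$. Suppose, for contradiction, that some $y_i\in\mathcal{C}$ is a true convex corner, i.e.\ $\nu_i(y_i)\neq\nu_{i+1}(y_i)$. Since the interior angle is strictly less than $\pi$, a short segment of the inner bisector emanating from $y_i$ lies in the cut locus of $\Omega$: its points are equidistant from both $\Gamma_i$ and $\Gamma_{i+1}$. For $y\in\Gamma_i$ at arclength $\epsilon$ from $y_i$, the equation $y-t\nu(y)=y_i-sb$, with $b$ the unit inner-bisector vector, is solvable with $t=s=O(\epsilon)$ thanks to the linear independence of $\nu_i$ and $b$ (which is exactly the condition $\nu_i\neq\nu_{i+1}$); hence $\lambda(y)\leq t\to 0$. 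Since $\kappa$ extends continuously to $\overline{\Gamma_i}$, the formula $\varphi(y)=\lambda(y)\bigl(1-\tfrac12\lambda(y)\kappa(y)\bigr)$ forces $\varphi(y)\to 0$, contradicting $\varphi\equiv c>0$. Thus $\Omega$ is $C^1$, and Steps~1 and~3 of Theorem~\ref{t:geom2} now apply directly: the pointwise bound $\kappa(y)\leq 1/(2c)=|\partial\Omega|/(2|\Omega|)$ combined with the now-standard Minkowski identity and the divergence theorem forces $\kappa$ to be constantly $|\partial\Omega|/(2|\Omega|)$ on every arc, so $\partial\Omega$ is a single circle as above.

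The most delicate point is the geometric claim $\lambda(y)\to 0$ near a true convex corner, which rests on locating a local piece of the cut locus along the inner bisector. This is precisely where the no-concave-corners hypothesis is needed: at a concave corner the inner bisector would exit $\Omega$, the local structure of the distance function becomes entirely different, and indeed the example right after Definition~\ref{d:ps} shows that the constancy of $\varphi$ on the smooth part no longer forces $\Omega$ to be a ball.
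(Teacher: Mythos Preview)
Your proof is correct and follows essentially the same route as the paper: in case~(i) the $C^1$ assumption kills the corner terms in the modified Minkowski formula~\eqref{f:minkm} and Steps~1--3 of Theorem~\ref{t:geom2} go through, while in case~(ii) one first identifies the constant $c=|\Omega|/|\partial\Omega|$ via~\eqref{chv}, then rules out true convex corners by showing $\lambda(y)\to 0$ there (the paper phrases this as ``a simple geometric argument''; your bisector argument makes it explicit), and finally combines the pointwise bound $\varphi(y)\kappa(y)\leq 1/2$ with the now-valid Minkowski identity. Your closing remark that constant curvature on each $C^2$ arc plus $C^1$ junctions forces $\partial\Omega$ to be a single circle is a detail the paper leaves implicit in its appeal to Alexandrov's theorem.
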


\begin{proof} Assume first that $\Omega$ satisfies condition (i). 
In this case, all the steps of the proof of Theorem~\ref{t:geom2}
can be repeated. In particular, Step 3 continues to hold thanks to the hypothesis that that 
$\Omega$ is of class $C^1$: indeed, such assumption ensures that
each addendum of the finite sum in the r.h.s.\ of
(\ref{f:minkm}) vanishes, so that  the Minkowski formula is still valid for $\Omega$ in the very same form as if it was a $C^2$ domain. 

Assume now that $\Omega$ satisfies condition (ii). 
Similarly as in Step 4 of the proof of Theorem~\ref{t:geom2}, since $\varphi$ is constant, 
from the
change of variable's formula \eqref{chv} we 
get that $\varphi(y) = |\Omega| / |\partial\Omega|$
for every $y\in\partial\Omega\setminus\mathcal{C}$.

This fact has two consequences. 

As a first consequence,  the upper bound inequality \eqref{f:dis1} in Step 2
holds in the form
\begin{equation}\label{f:dis1b}
\frac{|\Omega|}{|\partial\Omega|}\, \kappa(y)\leq \frac{1}{2}\,,
\qquad\forall y\in\partial\Omega\setminus\mathcal{C}.
\end{equation}
Namely, if $\kappa(y) \leq 0$ the inequality is trivial,
whereas, if $\kappa(y) > 0$, then $\lambda(y) \leq 1/\kappa(y)$ and
\[
\frac{|\Omega|}{|\partial\Omega|}
= \varphi(y) =
\lambda(y) - \frac{\lambda(y)^2}{2}\, \kappa(y)
\leq \frac{1}{2\kappa(y)}
\]
and \eqref{f:dis1b} follows.

As a second consequence,  $\partial\Omega$ cannot have (convex) corners.
Namely, if $y_i$ is a point with $\pscal{\nu_i(y_i)}{\nu_{i+1}(y_i)} < 1$,
a simple geometric argument shows that $\lim_{y\to y_i} \ll(y) = 0$.
Since $\kappa$ is uniformly bounded on 
$\partial\Omega\setminus\mathcal{C}$, 
we deduce that $\lim_{y\to y_i} \varphi(y) = 0$
and,
by assumption, we conclude that $\varphi = 0$ on
$\partial\Omega\setminus\mathcal{C}$, 
a contradiction.

Now, since \eqref{f:dis1b}  holds and  the absence of convex corners ensures that the Minkowski formula is satisfied for $\Omega$ in the very same form as if it was a $C^2$ domain, the conclusion of the proof 
is identical to Step 3 in the proof of Theorem~\ref{t:geom2}.
\end{proof}

\section{Application to PDE's of Monge-Kantorovich type}\label{secsabbia}

In this section we deal with question (\ref{Q2}) stated in the Introduction.
We begin by explaining more in detail its relationship with the overdetermined problem (\ref{deqp}) for the $p$-Laplacian.
Let $\Omega\subset \R ^n$ be an open bounded connected set of class $C^2$,  and let $f$ be a signed measure with finite total variation supported in $\overline \Omega$.  For every $p >1$, set
$$\alpha _p := \min \Big \{ \int _{\Omega} \frac{1}{p} |\nabla u | ^ p \, dx - \langle f , u  \rangle \ :\ u \in W ^ {1,p} _0 (\Omega) \Big \}\ ,$$
and denote by $u _p$ the unique minimizer, namely the unique solution to  the boundary value problem
\[\left\{\begin{array}{rll}
- \Delta_p u =f\qquad&\hbox{ in }\Omega\,,\\
u=0\qquad&\hbox{ on }\partial\Omega\, .
\end{array}\right.
\]
It was proved in \cite{BoBuDe} that, as $p \to + \infty$,
\begin{eqnarray}
&\alpha _p \to \alpha & \label{conv0} \\
\noalign{\smallskip}
& u _p \to u \ \ \hbox{ uniformly } & \label{conv1} \\
\noalign{\smallskip}
& |\nabla u _p| ^ { p-2} \rightharpoonup \mu \ \ \hbox{weakly as measures\ ,} & \label{conv2}
\end{eqnarray}
being
\begin{equation}\label{alpha}\alpha:= - \sup \Big \{ \langle f, u \rangle \ : \ u \in C^ \infty  _ 0 (\R^n) \,, \ |\nabla u| \leq 1 \hbox{ on } \Omega\, , \ u = 0 \hbox{ on } \partial \Omega \Big \}
\end{equation}
and
$(u, \mu)$  a solution to
\begin{equation}\label{deqp23}
\begin{cases}
- {\rm div} ( (D _\mu u) \mu) =f &\text{in}\ \Omega\,,\\
u \in {\rm Lip }_1 (\Omega, \partial \Omega)\,, \\
| D_\mu u| = 1 & \mu\hbox{-a.e.}
\end{cases}
\end{equation}
Here
\[
{\rm Lip } _1 (\Omega, \partial \Omega) = \Big \{ u \in W ^ {1, \infty} (\Omega)\ :\ u = 0 \hbox{ on } \partial \Omega \ \hbox{ and } \ |\nabla u| \leq 1 \hbox{ a.e. in } \Omega \Big \}\ ,
\]
where, thanks to the regularity of $\Omega$, we understand that a function
$u\in W^{1,\infty}(\Omega)$ is continuously extended to $\overline{\Omega}$,
while $\mu$ is a positive measure supported on $\overline \Omega$,  and $D_\mu u$ denotes the  $\mu$-tangential gradient of $u$, which can be suitably defined
starting from the notion of tangent space to $\mu$ introduced in \cite{BoBuSe} (see also \cite{FrMa}).

Here we skip to introduce the tangential calculus with respect to a measure (for which we refer {\it e.g.} to the survey paper \cite{BoFr}),
since we are going to restrict our attention to the case when $\mu$ is an absolutely continuous measure:
$$
\mu = v (x) \, dx \, , \hbox{ with }   v \in L ^ 1 (\Omega), \ \ v \geq 0\ .
$$
In this case, the $\mu$-tangential gradient of $u$ agrees with the gradient of $u$ on the set $\{ v>0 \}$, and vanishes on the complementary set $\{ v = 0 \}$. Thus system (\ref{deqp23}) can be rewritten as

\begin{equation}\label{deqp3}
\begin{cases}
- {\rm div} ( v \nabla u) =f &\text{in}\ \Omega\,,\\
u \in {\rm Lip }_1 (\Omega, \partial \Omega)\,, \\
( 1 - |\nabla u| ) v = 0  & \text{a.e.\ in}\ \Omega\, .
\end{cases}
\end{equation}

In view of  the convergence property (\ref{conv2}), we paraphrase the classical overdetermined Neumann condition for system (\ref{deqp}), namely
$$|\nabla u _p | = c \qquad \hbox{ on } \partial \Omega\ ,$$
with the the following overdetermined condition for system (\ref{deqp23}):
\begin{equation}\label{muac2}
\mu = v (x) \, dx \, , \hbox{ with }  v \in L ^ 1 (\Omega), \ \ v \geq 0,\  \ v= c \hbox{ on } \partial \Omega\ .
\end{equation}
By this way, we arrive at question (\ref{Q2}): under the assumption that the source $f$ is a positive constant on $\overline \Omega$, does the existence of a solution to (\ref{deqp23})-(\ref{muac2}) imply the roundness of $\Omega$?

\medskip

\begin{remark}{\rm
We point out that the constancy condition asked for $v$ on the boundary in (\ref{muac2}) is meaningful as soon as the source $f$ is assumed to be a continuous function on $\overline \Omega$, because in this case the $v$-component of a solution $(u, v)$ to (\ref{deqp3})  is itself a continuous function
(see \cite[Prop.~3.2]{CCCG}).
Moreover, we recall that the $v$-component is unique
(see \cite[Thm.~4.1]{CCCG}, \cite[Section 6]{CM10} and the proof
of Theorem~\ref{teosabbia} below),
whereas the $u$-component is unique (and coincides with $d_{\Omega}$)
if and only if the singular set $\Sigma$ of $d _\Omega$ is contained in the support of
the source $f$ \cite[Section 7]{CM10}.
In particular, if $f$ is a positive constant, then $u=d_{\Omega}$
and $u_p$ converges uniformly to $d_{\Omega}$ as $p\to\infty$
(see also \cite{BDM} and \cite[Remark~2.1]{BuKa}).}
\end{remark}

\medskip
 The next result gives an affirmative answer to question (\ref{Q2}) (under the appropriate assumptions on $\Omega$ which allow to apply Theorem \ref{t:geom2}; a similar statement clearly holds in 
the nonsmooth two-dimensional case covered by Theorem~\ref{t:geom2b}).

\begin{theorem}\label{teosabbia}
Let $\Omega\subset\R^n$ be a bounded connected open set of class $C^2$,
starshaped with respect to the origin.
If the source $f$ is positive and constant on $\overline \Omega$ and problem $(\ref{deqp23})$-$(\ref{muac2})$ admits a solution, then $\Omega$ is a ball.
\end{theorem}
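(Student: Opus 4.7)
The plan is to reduce the PDE symmetry problem to the geometric criterion provided by Theorem~\ref{t:geom2}, by showing that the overdetermined condition forces the function $\varphi$ of \eqref{f:phi} to be constant on $\partial\Omega$. Throughout, write $c_0$ for the positive constant value of $f$ on $\overline\Omega$, and $c$ for the constant value of $v$ on $\partial\Omega$.

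\textbf{Step 1: identification of $u$ and an ODE for $v$ along normal rays.}  By the remark preceding the theorem, when $f$ is a positive constant the $u$--component of any solution to \eqref{deqp23} must coincide with $d_\Omega$; in particular $\nabla u(y - t\nu(y)) = -\nu(y)$ along every normal ray, and $v\in C(\overline\Omega)$ by \cite[Prop.~3.2]{CCCG}.  Fix $y\in\partial\Omega$ and set $w(t):=v(y - t\nu(y))$ and $j(t):=\prod_{k=1}^{n-1}(1-t\kappa_k(y))$ for $t\in[0,\lambda(y)]$.  I would test the weak formulation of $-\dive(v\nabla u) = c_0$ against $\psi\in C^\infty_0(\Omega)$, apply the change of variables formula \eqref{chv} to both sides, and integrate by parts in the $t$-variable: this yields, on $\mathcal{H}^{n-1}$-a.e.\ ray, the distributional identity $(wj)'(t) = -c_0\, j(t)$ on $(0,\lambda(y))$.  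The initial condition $w(0)=v(y)=c$ and $j(0)=1$ then give the explicit representation
\begin{equation*}
v(y - t\nu(y))\, j(t) \;=\; c - c_0\!\int_0^t j(s)\,ds \qquad \forall\, t\in[0,\lambda(y)].
\end{equation*}

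\textbf{Step 2: a pointwise upper bound for $\varphi$.}  By Lemma~\ref{l:kd} all factors of $j$ are non-negative on $[0,\lambda(y)]$, so $j\geq 0$ there; combined with $v\geq 0$, the displayed identity forces $c - c_0 \int_0^t j\,ds \geq 0$ for all $t\leq\lambda(y)$.  Taking $t=\lambda(y)$ gives the key pointwise bound
\begin{equation*}
\varphi(y) \;\leq\; \frac{c}{c_0} \qquad \forall\, y\in\partial\Omega.
\end{equation*}
The ratio $c/c_0$ is then pinned down by a mass balance: applying the divergence theorem to $v\nabla u$ on $\Omega$ (using $\nabla u = -\nu$ and $v=c$ on $\partial\Omega$) yields $c_0|\Omega| = c|\partial\Omega|$, so $c/c_0 = |\Omega|/|\partial\Omega|$.

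\textbf{Step 3: $\varphi$ is constant, conclusion via Theorem~\ref{t:geom2}.}  The change of variables formula \eqref{chv} applied to $h\equiv 1$ gives $\int_{\partial\Omega}\varphi\,d\haus = |\Omega|$, i.e.\ the mean value of $\varphi$ on $\partial\Omega$ equals $|\Omega|/|\partial\Omega|$.  Combined with the pointwise upper bound from Step~2 (matching the mean), this forces $\varphi\equiv |\Omega|/|\partial\Omega|$ on $\partial\Omega$.  The second part of Theorem~\ref{t:geom2} then yields that $\Omega$ is a ball.

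\textbf{Main obstacle.}  The delicate point is the rigorous derivation of the ODE in Step~1: one needs to handle the regularity of $v$ along rays (guaranteed by continuity of $v$ and smoothness of $d_\Omega$ outside the measure-zero cut locus $\overline\Sigma$), and to justify the reduction of the distributional equation to a pointwise identity on $\mathcal{H}^{n-1}$-a.e.\ ray via the change of variables formula.  Once this is in place, the argument is essentially algebraic: it is the tightness of the inequality $\varphi\leq |\Omega|/|\partial\Omega|$ against its average that matches the hypothesis of Theorem~\ref{t:geom2}.
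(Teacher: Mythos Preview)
Your argument is correct, but it is considerably more elaborate than the paper's. The paper simply invokes two results from \cite{CCCG}: the explicit representation formula \eqref{f:vf} for the $v$-component $v_f$, and the uniqueness of the $v$-component. Evaluating $v_f$ at a boundary point $y$ with $f\equiv\gamma$ gives immediately $v(y)=v_f(y)=\gamma\,\varphi(y)$, so the constancy of $v$ on $\partial\Omega$ yields the constancy of $\varphi$ in one line, and Theorem~\ref{t:geom2} finishes. Your route instead re-derives (essentially) that representation along each normal ray as an ODE, but then does \emph{not} exploit the endpoint information that $v$ vanishes on the cut locus; you use only $v\geq 0$ and $j\geq 0$ to obtain the one-sided bound $\varphi(y)\leq c/c_0$, and then close the gap by the mass balance $c|\partial\Omega|=c_0|\Omega|$ together with the averaging identity $\int_{\partial\Omega}\varphi=|\Omega|$. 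This is a genuinely different (and self-contained) mechanism: it trades the explicit vanishing of $v$ at cut points for a softer integral argument. The cost is length and the acknowledged technical work in Step~1; the benefit is that you never need the full structure of $v_f$ on $\overline{\Sigma}$, only continuity of $v$ and nonnegativity. Note, incidentally, that once you have your identity $w(t)j(t)=c-c_0\int_0^t j$, evaluating at $t=\lambda(y)$ and invoking $v_f=0$ on $\overline{\Sigma}$ (or $j(\lambda(y))=0$ at focal points) would give $\varphi(y)=c/c_0$ directly and make Steps~2--3 unnecessary; that shortcut is effectively what the paper's citation of \eqref{f:vf} accomplishes.
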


\begin{proof}
Since $f\in C(\overline{\Omega})$,
one solution to problem (\ref{deqp3}) is given
by the pair $(d_{\Omega}, v_f)$, where $v_f\colon\overline{\Omega}\to \R$ is the
(continuous) function defined by
\begin{equation}\label{f:vf}
v_f(x) =
\begin{cases}
\displaystyle{
\int_0^{\tau(x)}
f(x+t\nu(x))
\prod_{i=1}^{n-1}\frac{1-(d_{\Omega}(x)+t)\, \kappa_i(x)}%
{1-d_{\Omega}(x)\, \kappa_i(x)}\, dt}
&\textrm{if $x\in{\overline{\Omega}}\setminus\overline{\Sigma}$},\\
0,
&\textrm{if $x\in\overline{\Sigma}$}
\end{cases}
\end{equation}
(see \cite[Thm.~3.1]{CCCG}).
Here, for every $x\in\overline{\Omega}\setminus\overline{\Sigma}$,
we have used the notation
$\kappa_i(x) := \kappa_i(\pi(x))$, $\nu(x) := \nu(\pi(x))$,
and $\tau(x) := \ll(\pi(x)) - d_{\Omega}(x)$.
Moreover, the following uniqueness result holds for
the $v$-component of the system:
if $(u,v)$ is any solution to (\ref{deqp3}), then
$v = v_f$
(see \cite[Thm.~4.1]{CCCG}).

In particular, in the case $x\in\partial\Omega$ (i.e.\ $d_{\Omega}(x) = 0$), the representation formula (\ref{f:vf}) can be rewritten as:
\[v_f(x) =
\int_0^{\ll(x)}
f(x+t\nu(x))
\prod_{i=1}^{n-1}(1-t \kappa_i(x))\, dt,
\qquad x\in\partial\Omega.
\]
From such representation formula at the boundary and the uniqueness of the
$v$-component, when $f$ is a positive constant $\gamma$  for every $x\in\overline{\Omega}$,
 we have
\[
v (y) = v_f(y) = \gamma \varphi(y)\qquad \forall y\in\partial\Omega,
\]
where $\varphi$ is precisely the function defined at (\ref{f:phi}).
The conclusion now follows from Theorem~\ref{t:geom2}.
\end{proof}

\bigskip
Theorem \ref{teosabbia} admits the following physical interpretations.

\bigskip
-- {\it A thermic model}.  Consider the problem of finding  a positive measure $\mu$ which solves the variational problem
\begin{equation}\label{pbBB}
\min \Big \{ {\mathcal C} (\mu) \ :\ \int d \mu = m \,,\ {\rm spt} (\mu) \subseteq \overline \Omega \Big \}\ ,
\end{equation}
where $m$ is a prescribed positive parameter, and the cost ${\mathcal C}$ is given by
$${\mathcal C} (\mu) :=  -\inf \Big \{ \int  j (\nabla u) \, d \mu - \langle f, u \rangle  \ :\ u \in {\mathcal D} (\R ^n) \ ,\ u = 0 \hbox{ on } \partial \Omega \Big \} \ ,$$
being $j$ a stored energy density (typically, $j (z) = \frac{1}{2}|z| ^ 2$), and $f$ a signed measure with finite total variation supported on $\overline \Omega$. If
$\mu$ is interpreted as the distribution of a conducting material, $u$ as the temperature (which is kept $0$ at the boundary), and
$f$ as a given heat sources density, (\ref{pbBB}) can be interpreted as the optimal design problem of finding the most performant conductor of prescribed mass in the design region $\overline \Omega$.

A key result established in \cite[Theorem 2.3]{BoBu} states that the minimum in (\ref{pbBB}) is equal to
$$\frac{\alpha ^ 2}{2m}\ ,$$
with $\alpha$ as in (\ref{conv0}).
Moreover, up to constant multiples, system (\ref{deqp23}) provides necessary and sufficient optimality conditions on $u$ and $\mu$ for being a solution respectively to (\ref{alpha}) and (\ref{pbBB}) \cite[Theorem 3.9]{BoBu}.

In the light of these results, Theorem \ref{teosabbia} can be interpreted as follows:

\medskip
{\it Assume that a constant source heats a region $\overline \Omega$, and that the most performant conductor in such region is represented by a function constant on the boundary $\partial \Omega$. Then $\Omega$ is a ball.}

\bigskip
-- {\it A model for sandpiles}.
In the dynamical theory of granular matter the so-called table problem
consists in studying the evolution of a sandpile created by pouring
dry matter (the sand) onto a table, which is represented by a bounded open
set $\Omega\subset\mathbb{R}^2$, while the (time-independent) matter
source is represented by a non-negative function $f\in L^1(\Omega)$.
Among differential models, in the so-called BCRE model
(after Bouchaud, Cates, Ravi Prakash, Edwards \cite{BCRE}) and its successive modifications
(see \cite{HK}), the description of the growing sandpile
is based on the introduction of two layers,
the \textsl{standing layer}, which collects the matter
that remains at rest, and the
\textsl{rolling layer}, which is the thin layer of matter
moving down along the surface of the standing layer.

The equilibrium solutions of this model
are precisely the solutions to (\ref{deqp3}),
where $u$ and $v$ denote respectively the thickness of
the standing and rolling layer.

Theorem \ref{teosabbia} has then the following straightforward
meaning:

\medskip
{\it Assume that a uniformly distributed (and constant in time)
amount of dry sand is poured onto a table $\Omega\subset\mathbb{R}^2$,
and that, once the equilibrium is reached, the
thickness of the rolling layer is constant
on the boundary $\partial \Omega$. Then $\Omega$ is a disk.}


\bigskip

\section{Application to PDE's with partially web solutions}\label{secparweb}

In this section we deal with question (\ref{Q1}) stated in the Introduction. We adopt the same notation for the set
$\Omega_\Gamma$ and the space  $\W (\Omega; \Omega_\Gamma)$.

The next result gives sufficient conditions on $\Gamma$ for an affirmative answer in space dimensions $n=2$;
$\kappa$ denotes the curvature of $\partial \Omega$.

\begin{theorem}\label{teopartialweb} Let $\Omega\subset\R^2$ be a bounded connected open set of class $C^2$,
starshaped with respect to the origin.
Assume that $A\in C([0,+\infty))$ and that
there exists a solution $u$ to equation \eqref{f:elleq} in $\Omega$ 
belonging to the space
\begin{equation}\label{f:w1}
\W^1 (\Omega; \Omega_\Gamma) :=
\left\{
u\in \W (\Omega; \Omega_\Gamma)\ : \
u|_{\overline{\Omega_\Gamma}} \in C^1
\right\}\,,
\end{equation}
where $\Gamma$ is a relatively open connected subset of $\partial \Omega$
such that
\begin{itemize}
\item[(i)] the maximum of $\kappa$  on $\partial \Omega$ is attained on $\Gamma$;
\item[(ii)] for every $\epsilon > 0$ small enough,
a.e.\ in
$\Omega_{\epsilon} := \{x\in\Omega:\ d_{\Omega}(x) < \epsilon\}$
there holds:
\[
A(|\nabla u|)\pscal{\nabla u}{\nabla d_{\Omega}} \leq
 [-A(|\nabla u|) u_{\nu}] {\big | _ \Gamma} + o(1)
\]
(where $o(1)$ denotes a quantity that vanishes as $\epsilon\to 0$).
\end{itemize}
Then $\Omega$ is a ball.
\end{theorem}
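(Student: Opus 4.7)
The plan is to invoke Theorem~\ref{t:geom2} by exhibiting a point $y_0 \in \partial \Omega$ with $H(y_0) = \max_{\partial\Omega} H$ and $\varphi(y_0) \geq |\Omega|/|\partial\Omega|$. Since $n=2$ we have $H = \kappa$, and condition (i) guarantees that some such $y_0$ may be chosen in $\Gamma$. Write $u|_{\overline{\Omega_\Gamma}} = h(d_\Omega)$ for some $h \in C^1$; up to an additive constant we take $h(0) = 0$. Set $c := h'(0)$ and $g(t) := A(|h'(t)|)\, h'(t)$, so that on $\Omega_\Gamma$ the flux reads $A(|\nabla u|)\nabla u = g(d_\Omega)\nabla d_\Omega$, and $g(0) = \bigl[-A(|\nabla u|)u_\nu\bigr]\big|_\Gamma$.

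To derive an ODE-type identity along normals, I apply the weak form of the PDE (equivalently, the divergence theorem) to the flux $g(d_\Omega)\nabla d_\Omega$ on the ``normal tube'' $T_{\Gamma',s} := \{y - t\nu(y) : y \in \Gamma',\ 0 < t < s\}$, for any arc $\Gamma' \Subset \Gamma$ and $s < \inf_{\Gamma'} \lambda$. The lateral sides of $T_{\Gamma',s}$ contribute nothing because $\nabla u \parallel \nu$ there while the outer normal to those sides is orthogonal to $\nu$; computing the two cap areas and the tube volume via the change-of-variables formula \eqref{chv} and then localizing in $y$ yields, for every $y \in \Gamma$ and $s \in (0, \lambda(y))$,
\[
g(0) - g(s)\,(1 - s\kappa(y)) \;=\; s - \tfrac12\, s^2\, \kappa(y).
\]
Now fix $y_0 \in \Gamma$ with $\kappa(y_0) = \max_{\partial\Omega} \kappa$; Lemma~\ref{l:focal} gives $\lambda(y_0) = 1/\kappa(y_0)$, hence $(1 - s\kappa(y_0)) \to 0$ as $s \to \lambda(y_0)^-$. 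Since $u \in C^1(\overline{\Omega_\Gamma})$ and $A$ is continuous, $g$ is bounded on $[0, \lambda(y_0)]$, so passing to the limit one obtains $\varphi(y_0) = g(0)$.

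It remains to show $g(0) \geq |\Omega|/|\partial\Omega|$ using (ii). The divergence theorem applied to $A(|\nabla u|)\nabla u$ on the super-level set $\{d_\Omega > s\}$ (whose outward normal along $E_s := \{d_\Omega = s\}$ is $-\nabla d_\Omega$) gives $|\{d_\Omega > s\}| = \int_{E_s} A(|\nabla u|)\nabla u \cdot \nabla d_\Omega\, d\mathcal H^1$. Integrating in $s \in (0, \epsilon)$, applying the coarea formula (recall $|\nabla d_\Omega| = 1$), and then (ii), yields
\[
\int_0^\epsilon |\{d_\Omega > s\}|\, ds \;=\; \int_{\Omega_\epsilon} A(|\nabla u|)\nabla u \cdot \nabla d_\Omega\, dx \;\leq\; \bigl(g(0) + o(1)\bigr)\, |\Omega_\epsilon|.
\]
Dividing by $\epsilon$ and letting $\epsilon \to 0^+$, the left-hand side converges to $|\Omega|$ while $|\Omega_\epsilon|/\epsilon \to |\partial\Omega|$; hence $g(0) \geq |\Omega|/|\partial\Omega|$. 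Combined with the previous step this gives $\varphi(y_0) \geq |\Omega|/|\partial\Omega|$, and Theorem~\ref{t:geom2} concludes that $\Omega$ is a ball.

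The main technical delicacy is the rigorous derivation of the identity for $g(s)$ from only the $C^1$ regularity of $u$ and the weak form of the PDE; framing it as a divergence-theorem argument on normal tubes (rather than as a pointwise ODE in $h'$) avoids any need for second derivatives of $h$. The limit $s \to \lambda(y_0)^-$ is benign precisely because at a maximum of $\kappa$ the factor $(1 - s\kappa(y_0))$ vanishes, so any irregularity of $g$ at the focal cut point is absorbed.
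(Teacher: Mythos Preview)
Your proof is correct and follows essentially the same route as the paper. The identity $g(0)-g(s)(1-s\kappa(y))=s-\tfrac12 s^2\kappa(y)$, obtained by applying the weak equation on normal tubes and then localising, together with the passage $s\to\lambda(y_0)^-$ via Lemma~\ref{l:focal}, is precisely the content of Proposition~\ref{teo10}; the paper carries this out with explicit Lipschitz test functions $\psi_{\rho,\epsilon}=\phi_\epsilon(t)\eta_\epsilon(\sigma)$ rather than invoking the divergence theorem on $T_{\Gamma',s}$, but the computation is the same. Your second step is likewise the paper's: the paper uses the single test function $\psi_\epsilon(x)=\min\{\epsilon^{-1}d_\Omega(x),1\}$, which after the weak formulation yields exactly your integrated identity $\int_0^\epsilon|\{d_\Omega>s\}|\,ds=\int_{\Omega_\epsilon}A(|\nabla u|)\langle\nabla u,\nabla d_\Omega\rangle\,dx$ (your phrasing ``divergence theorem on $\{d_\Omega>s\}$, then integrate in $s$ and apply coarea'' is the same computation read backwards). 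One small caveat: the pointwise identity $|\{d_\Omega>s\}|=\int_{E_s}A(|\nabla u|)\langle\nabla u,\nabla d_\Omega\rangle\,d\mathcal H^1$ is not a priori justified for each $s$, since outside $\Omega_\Gamma$ you have no regularity on $A(|\nabla u|)\nabla u$ beyond the weak equation; but you only use the $s$-integrated version, which is exactly the weak formulation tested against $\min\{d_\Omega,\epsilon\}$, so no harm is done.
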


\begin{remark}{\rm In assumption (ii),  $[-A(|\nabla u|) u_{\nu}] {\big | _ \Gamma}$ denotes the restriction of $[-A(|\nabla u|) u_{\nu}]$ to $\Gamma$, which is constant since $u \in \W (\Omega; \Omega_\Gamma)$.
We point out that, if one
assumes further that $u$ is of class $C^1(\overline{\Omega_\epsilon})$,
condition (ii) can be rewritten in a more comfortable way. Namely, in this case it becomes
\begin{itemize}
\item[(ii')]
the maximum of $( - A (|\nabla u|) u _\nu)$ on $\partial \Omega$ is attained on $\Gamma$.
\end{itemize}}
\end{remark}

\medskip
Theorem \ref{teopartialweb}  should be compared with the results obtained for partially overdetermined boundary value problems obtained in \cite{FG}.
Therein, it was proved in particular that existence of a solution to the boundary value problem
\begin{equation}\label{deqtris} \left\{\begin{array}{rll}
- \Delta u=1\qquad&\hbox{ in }\Omega\,,\\
u=0\qquad&\hbox{ on }\partial \Omega\,, \\
|\nabla u| =c\qquad&\hbox{ on }\Gamma\,,
\end{array}\right.
\end{equation}
implies that $\Omega$ is a ball under one of the following assumptions:

\medskip
{--} $\partial \Omega$ is connected, and $\Gamma \subseteq \partial \widetilde \Omega$ for some open set $\widetilde \Omega$ with connected analytic boundary
({\it cf.} \cite[Theorem 1]{FG})
;

\medskip
{--} $\partial \Omega \in C ^ {2, \alpha}$, $\sup _{x \in \Gamma} H (x) \geq {1/nc}$ and the maximum of $|\nabla u|$ over $\partial \Omega$ is attained on $\Gamma$ ({\it cf.} \cite[Theorem 3]{FG}).

\medskip
In comparison with \cite[Theorem 1]{FG}, let us remark that the proof of Theorem \ref{teopartialweb} is straightforward in case $A$ is the Laplacian
or any other operator in divergence form for which one knows the following two facts: equation $(\ref{f:elleq})$ has a analytic solution inside $\Omega$ and  Serrin's symmetry result holds true. Indeed in this case, by uniqueness of the analytic continuation, the solution which is assumed to exist in ${\mathcal W} (\Omega; \Omega _\Gamma)$ agrees with the analytic solution to $(\ref{f:elleq})$ on whole of $\Omega$, so that $u$ satisfies at the same time a constant Dirichlet and Neumann condition on the entire $\partial \Omega$, and the conclusion follows from Serrin's result.
Clearly, this kind of argument does not apply any longer when dealing with degenerated elliptic operators such as the $p$-Laplacian, for which analytic regularity of solutions on the whole of $\Omega$ is not fulfilled (see {\it e.g.} \cite{DiBe, Tolk}).

On the other hand, it is interesting to observe that the assumptions of Theorem \ref{teopartialweb} are quite similar to those of \cite[Theorem 3]{FG}, though (as already mentioned in the Introduction) neither the existence of a solution $u \in \W (\Omega; \Omega _\Gamma)$ implies the existence of a solution to (\ref{deqtris}) nor the converse, and though the proof techniques are completely different.

The proof of Theorem \ref{teopartialweb} relies on Theorem \ref{t:geom2} combined with Proposition \ref{teo10} below, where we establish a
link holding, at a point of maximal curvature, between the normal derivative of partially web solutions to $(\ref{f:elleq})$,
and the function $\varphi$ introduced in (\ref{f:phi}).

\begin{proposition}\label{teo10}
Let $\Omega\subset\R^2$ be a bounded connected open set of class $C^2$,
starshaped with respect to the origin.
Assume there exists a solution $u$ to equation $(\ref{f:elleq})$ in $\Omega$ belonging to the space $\W^1 (\Omega; \Omega_\Gamma)$
defined in \eqref{f:w1},
where $\Gamma$ is a relatively open connected subset of $\partial \Omega$ such that
the maximum of $\kappa$  on $\partial \Omega$ is attained at some point  $y _0 \in\Gamma$.

Then the following identity holds at $y _0$:
\[
A(|\nabla u | (y _0)) u _\nu (y _0)   \displaystyle{=  -\varphi(y_0)   } \ .
\]
\end{proposition}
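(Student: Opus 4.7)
The plan is to exploit the web structure of $u$ on $\Omega_\Gamma$ in order to reduce the identity to a one-dimensional computation. Since $u\in \W^1(\Omega;\Omega_\Gamma)$, write $u(x)=h(d_\Omega(x))$ on $\overline{\Omega_\Gamma}$ with $h\in C^1([0,\ll(y_0)])$, and set $F(s):=A(|h'(s)|)\,h'(s)$. Then on $\Omega_\Gamma\setminus\overline{\Sigma}$ the flux reads $A(|\nabla u|)\nabla u = F(d_\Omega)\nabla d_\Omega$; since $\nabla d_\Omega(y)=-\nu(y)$ on $\partial\Omega$, at the point $y_0\in\Gamma$ this gives $A(|\nabla u|(y_0))\,u_\nu(y_0) = -F(0)$, so it suffices to prove $F(0)=\varphi(y_0)$.

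To this end I would parameterize a neighborhood of $y_0$ in $\Gamma$ by arclength $s\mapsto y(s)$ with $y(s_0)=y_0$, fix $\delta\in(0,\ll(y_0))$, and---using the lower semicontinuity of $\ll$ at $y_0$---choose $\epsilon>0$ small enough so that the thin tubular region
\[
D_\delta^\epsilon := \{\, y(s)- t\nu(y(s)) \,:\, s\in(s_0-\epsilon, s_0+\epsilon),\ t\in(0,\delta)\,\}
\]
is contained in $\Omega_\Gamma\setminus\overline{\Sigma}$. Integrating $-\dive(A(|\nabla u|)\nabla u)=1$ over $D_\delta^\epsilon$ via the divergence theorem, the two lateral sides (normal segments at $s=s_0\pm\epsilon$) yield no flux, because there $\nabla u$ is parallel to $\nu(y(s_0\pm\epsilon))$ and hence orthogonal to the exterior unit normal of $D_\delta^\epsilon$. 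The outer piece of $\Gamma$ contributes $-F(0)\cdot 2\epsilon$, the inner parallel curve $\{d_\Omega=\delta\}$ contributes $F(\delta)\int_{s_0-\epsilon}^{s_0+\epsilon}(1-\delta\kappa(y(s)))\,ds$, and the standard Jacobian $1-t\kappa$ gives $|D_\delta^\epsilon| = \int_{s_0-\epsilon}^{s_0+\epsilon}\bigl[\delta-\tfrac{1}{2}\delta^2\kappa(y(s))\bigr]\,ds$. Dividing by $2\epsilon$ and letting $\epsilon\to 0^+$, the continuity of $\kappa$ yields
\[
F(0) - F(\delta)\,\bigl(1-\delta\kappa(y_0)\bigr) = \delta - \frac{\delta^2}{2}\,\kappa(y_0),\qquad \forall\,\delta\in(0,\ll(y_0)).
\]

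To conclude I would let $\delta\to\ll(y_0)$: by Lemma~\ref{l:focal} one has $\ll(y_0)=1/\kappa(y_0)$, so the coefficient $1-\delta\kappa(y_0)$ vanishes in the limit; the $C^1$ hypothesis on $u|_{\overline{\Omega_\Gamma}}$ ensures that $h'$, and hence $F$, extends continuously up to $\ll(y_0)$ and is bounded, so $F(\delta)(1-\delta\kappa(y_0))\to 0$. Passing to the limit gives
\[
F(0) = \ll(y_0) - \frac{\ll(y_0)^2}{2}\,\kappa(y_0) = \int_0^{\ll(y_0)}(1-t\kappa(y_0))\,dt = \varphi(y_0),
\]
which is the claim.

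The main technical obstacle is the rigorous justification of the divergence theorem on $D_\delta^\epsilon$: this set has corners at $y(s_0\pm\epsilon)$, and the flux $F(d_\Omega)\nabla d_\Omega$ is a priori only continuous (not $C^1$) on $\overline{D_\delta^\epsilon}$. Since the distributional divergence equals the constant $-1$ in a neighborhood of $D_\delta^\epsilon$ inside $\Omega$, this can be handled either by invoking a version of the Gauss--Green formula for continuous vector fields with bounded divergence, or by testing the weak formulation of the PDE against a suitable sequence of mollified characteristic functions of $D_\delta^\epsilon$ and passing to the limit.
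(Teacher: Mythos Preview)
Your proof is correct and takes essentially the same approach as the paper: localize near $y_0$ in normal coordinates, use that the flux $A(|\nabla u|)\nabla u=F(d_\Omega)\nabla d_\Omega$ has no lateral component, and invoke Lemma~\ref{l:focal} to make the inner boundary contribution at depth $\ll(y_0)$ vanish. The paper carries this out by testing the weak formulation against explicit Lipschitz cutoffs $\psi_{\rho,\epsilon}$ supported in $\Omega_\Gamma$ and sending $\epsilon\to 0$, $\rho\to 0$---which is precisely the ``mollified characteristic functions'' remedy you yourself propose for the divergence-theorem obstacle---so the two arguments differ only in packaging.
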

\proof

Let $Y\colon (-r,r)\to\partial\Omega$
be a local parametrization of $\Gamma$ by arc-length,
such that $Y(0) = y_0$. For $\rho\in (-r,r)$, let
$$\Gamma _\rho:= Y ( -\rho, \rho)\ .$$
For $\rho$ as above and $t \in [0, \Lambda (\rho)]$,
we set
$$\Lambda(\rho) := \lambda(Y(\rho))\,, \qquad
K(\rho) := \kappa(Y(\rho))\ ,$$
and
$$T(\rho) := \tau(Y(\rho)) = \dot Y (\rho)\,, \qquad  N(\rho) := \nu(Y(\rho))\, ,  \qquad X (\rho, t) = Y (\rho) - t N (\rho)\ .$$
By assumption, \eqref{f:elleq}  admits a solution 
$u \in \W^1 (\Omega; \Omega_\Gamma)$,
so we can write
$u(x) = h(d_{\Omega}(x))$ for every $x\in\overline{\Omega_{\Gamma}}$, with $h \in C^ 1$, and it holds
\begin{equation}\label{gradu}\nabla u ( X (\rho , t)) = - h' (t) N (\rho)\ .\end{equation}
We now construct a suitable family of test function to be used in equation $(\ref{f:elleq})$.

Let $\Lambda_{\rho} := \min_{|\sigma|\leq\rho} \Lambda(\sigma)$.
For $\epsilon>0$ small enough let
$\phi_{\epsilon}, \eta_{\epsilon}\colon\R\to\R$
be the functions defined by
\[
\phi_{\epsilon}(t) :=
\begin{cases}
0 &
\textrm{if $t\leq 0$ or $t\geq \Lambda_{\rho} - \epsilon$,}\\
1 &
\textrm{if $t\in [\epsilon, \Lambda_{\rho}-2\epsilon]$,}\\
\frac{t}{\epsilon} &
\textrm{if $t\in (0, \epsilon)$,}\\
\frac{\Lambda_{\rho} - \epsilon-t}{\epsilon} &
\textrm{if $t\in (\Lambda_{\rho}-2\epsilon, \Lambda_{\rho} - \epsilon)$,}
\end{cases}\quad
\eta_{\epsilon}(\sigma) :=
\begin{cases}
0 & \textrm{if $|\sigma|\geq \rho$,}\\
1 & \textrm{if $|\sigma|\leq \rho-\epsilon$,}\\
\frac{\rho-|\sigma|}{\epsilon} &
\textrm{if $\rho-\epsilon < |\sigma|< \rho$.}
\end{cases}
\]
Then, for $\rho \in (- r , r)$ and $\epsilon $ small enough, we consider the family of functions  $\psi_{\rho,\epsilon}\colon\Omega\to\R$ given by
\[
\psi_{\rho,\epsilon}(x):=
\begin{cases}
\phi_{\epsilon}(t) \eta_{\epsilon}(\sigma),
&\text{if $x = X(\sigma, t)$ for some $(\sigma, t)\in D_{\rho} := (-\rho,\rho)\times
(0, \Lambda_{\rho}-\epsilon)$},\\
0,
&\text{otherwise}.
\end{cases}
\]
Since $X$ is a $C^1$ diffeomorphism from $\overline{D_{\rho}}$ to $X(\overline{D_{\rho}})$,
each function $\psi_{\rho,\epsilon}$ is Lipschitz continuous.
Therefore, it can be taken as a test function in equation $(\ref{f:elleq})$.
Passing to the limit as $\epsilon \to 0$, then dividing by $|\Gamma _\rho |$
and passing to the limit also as $\rho \to 0$,
we obtain
\begin{equation}\label{f:eq1}
\lim _{\rho \to 0} \Big \{ \frac{1}{|\Gamma _\rho |} \lim _{\epsilon \to 0} \int _\Omega \langle A (|\nabla u|) \nabla u, \nabla \psi _{\rho, \epsilon } \rangle  \, dx
\Big \} = \lim _{\rho \to 0} \Big \{ \frac{1}{|\Gamma _\rho |} \lim _{\epsilon \to 0} \int _\Omega \psi _{\rho, \epsilon } \, dx\Big \}\ .
\end{equation}
The right hand side of (\ref{f:eq1}) is immediately computed as
\begin{equation}\label{f:eq2}
\lim _{\rho \to 0} \Big \{ \frac{1}{|\Gamma _\rho |} \lim _{\epsilon \to 0} \int _\Omega \psi _{\rho, \epsilon } \, dx\Big \}
= \lim _{\rho \to 0} \frac{|\Omega _\rho|}{|\Gamma _\rho|} = \lim _{\rho \to 0} \frac{\int_{\Gamma_\rho} \varphi \, d {\mathcal H} ^1}{|\Gamma _\rho|}
= \varphi (y _0)\ .
\end{equation}
Let us compute the left hand side of (\ref{f:eq1}).

Differentiating the relation $\psi_{\rho,\epsilon}(X(\sigma,t)) = \phi_{\epsilon}(t) \eta_{\epsilon}(\sigma)$
with respect to $t$ we get
\begin{equation}\label{f:diffpsi}
\pscal{\nabla\psi_{\rho,\epsilon}(X(\sigma,t))}{N(\sigma)} = -\phi_{\epsilon}'(t) \eta_{\epsilon}(\sigma)
\qquad\text{a.e.\ on } D_{\rho}.
\end{equation}
By combining (\ref{gradu}) and (\ref{f:diffpsi}), we infer
\[
\begin{array}{ll} \pscal{A (|\nabla u|) \nabla u}{\nabla\psi_{\rho,\epsilon}}(X(\sigma,t)) & =
-A(|h'(t)|) h'(t) \pscal{N(\sigma)}{\nabla\psi_{\rho,\epsilon}(X(\sigma,t))} \\ \noalign{\medskip}
& = A(|h'(t)|) h'(t) \phi_{\epsilon}'(t) \eta_{\epsilon}(\sigma) \qquad \text{a.e.\ on } D_{\rho}.\\
\end{array}
\]
Then, by using the change of variables formula (\ref{chv}), we get
\[
\begin{split}
\int _\Omega  \langle A (|\nabla u|) & \nabla u, \nabla \psi _{\rho, \epsilon } \rangle  \, dx
\\ = {} &
\int _{D_\rho} \langle A (|\nabla u|) \nabla u, \nabla \psi _{\rho, \epsilon } \rangle  \, dx
\\ = {} &
\int_{-\rho}^{\rho}
\int_0 ^ {\Lambda (\sigma) }
 A(|h'(t)|) h'(t) \phi_{\epsilon}'(t) \eta_{\epsilon}(\sigma)   \big [ 1 - t K (\sigma) \big ] \, dt \, 
d\sigma
\\ = {} &
 \frac{1}{\epsilon} 
\int_{-\rho}^{\rho}
\int_0 ^ {\epsilon }
 A(|h'(t)|) h'(t)  \eta_{\epsilon}(\sigma)   \big [ 1 - t K (\sigma) \big ] \, dt \, 
d\sigma 
\\ & -
\frac{1}{\epsilon} 
\int_{-\rho}^{\rho}
\int_{\Lambda _\rho - 2 \epsilon} ^ {\Lambda _ \rho - \epsilon}
 A(|h'(t)|) h'(t)  \eta_{\epsilon}(\sigma)   \big [ 1 - t K (\sigma) \big ] \, dt \, 
d\sigma
\ .
\end{split}
\]
In the limit as $\epsilon \to 0^+$, by the regularity assumption made on the solution $u$ and on the operator $A$ (recall that 
$h\in C^1$
and $A \in C ([ 0 , + \infty))$), we obtain
\[
\begin{split}
\lim _{\epsilon  \to 0 ^+} & \int _\Omega \langle A (|\nabla u|) \nabla u, \nabla \psi _{\rho, \epsilon } \rangle  \, dx
\\ & =
\int_{-\rho}^{\rho}
\Big \{ A(|h'(0)|) h'(0) -  A(|h'(\Lambda _\rho)|) h'(\Lambda _\rho) \big [ 1 - \Lambda _\rho K (\Lambda _\rho) \big ] \Big \}
d\sigma
\\ & =
|\Gamma _\rho| \Big \{ A(|h'(0)|) h'(0) -  A(|h'(\Lambda _\rho)|) h'(\Lambda _\rho) \big [ 1 - \Lambda _\rho K (\Lambda _\rho) \big ] \Big \} \, .
\end{split}
\]
In the limit as $\rho  \to 0 ^+$, by exploiting again the regularity assumptions recalled above, we obtain
$$\lim _{\rho \to 0 ^+} A(|h'(\Lambda _\rho)|) h'(\Lambda _\rho) \big [ 1 - \Lambda _\rho K (\Lambda _\rho) \big ]   = A(|h'(\lambda (y _0))|) h'(\lambda (y _0)) \big [ 1 - \lambda (y _0) \kappa (y _0)  \big ] = 0\ ,$$
where the last equality follows from Lemma \ref{l:focal}.

We have thus proved that the left hand side of (\ref{f:eq1}) is given by
\begin{equation}\label{f:eq3}
\lim _{\rho \to 0} \Big \{ \frac{1}{|\Gamma _\rho |} \lim _{\epsilon \to 0} \int _\Omega \langle A (|\nabla u|) \nabla u, \nabla \psi _{\rho, \epsilon } \rangle  \, dx
\Big \} = A(|h'(0)|) h'(0) = - A (|\nabla u| ( y _0) ) u _\nu (y _0)\ .
\end{equation}
The proof is achieved by combining (\ref{f:eq1}),  (\ref{f:eq2}), and (\ref{f:eq3}).
\qed

\bigskip
\bigskip


\medskip
{\it Proof of Theorem \ref{teopartialweb}}. By assumption (i), there exists a point $y _0 \in \Gamma$ such that
\begin{equation}\label{f:h1b}
\kappa (y _0) = \max _{y \in \partial \Omega} \kappa (y)\ .
\end{equation}
By Proposition \ref{teo10}, it holds
\[\varphi ( y _0) = - A \big ( |\nabla u| (y _0) \big ) u _\nu ( y _0) \ .
\]
We now exploit again the assumption that $u = h (d_\Omega)$ in 
$\overline{\Omega _\Gamma}$, for some function $h:\R ^+ \to \R$
of class $C^1$.
Thus
$A \big ( |\nabla u| \big ) u _\nu $ is constant on $\Gamma$,
so that
\[
\varphi(y_0) = - A \big ( |h' (0) |)  h' (0) = [-A(|\nabla u|) u_{\nu}] {\big | _ \Gamma} 
\]
and assumption (ii) becomes
\[
A(|\nabla u|)\pscal{\nabla u}{\nabla d_{\Omega}} \leq
\varphi(y_0) + o(1) \qquad  \text{a.e.\ in}\
\Omega_{\epsilon}\,.
\]
Using $\psi_{\epsilon}(x) := \min\{ \epsilon^{-1} d_{\Omega}(x), 1\}$
as a test function in (\ref{f:elleq}) we obtain,
for $\epsilon > 0$ small enough,
\[
\int_{\Omega}\psi_{\epsilon}\, dx =
\frac{1}{\epsilon}\int_{\Omega_{\epsilon}}
A(|\nabla u|)\pscal{\nabla u}{\nabla d_{\Omega}}\, dx
\leq \varphi(y_0) |\partial\Omega| + o(1)
\]
and, taking the limit as $\epsilon\to 0$,
\[
|\Omega| \leq \varphi(y_0) |\partial\Omega|.
\]
This inequality, together with (\ref{f:h1b}), ensures that the hypotheses of Theorem \ref{t:geom2} are satisfied. Hence $\Omega$ must be a ball.  \qed



\bigskip\bigskip

\end{document}